\theoremstyle{plain}
    \newtheorem{thm}{Theorem}[section]
    \newtheorem{prop}{Proposition}[section]
    \newtheorem{lemma}{Lemma}[section]
    \newtheorem{rem}{Remark}[section]
\numberwithin{equation}{section}
\begin{document}
\begin{center}
{\bf\Large
Coexisting  steady-state solutions of a class of reaction-diffusion systems with different boundary conditions}

\vskip.20in

{Ningning~Zhu$^1$, Dongpo Hu$^1$, Huili Bi$^{2,*}$}

\vskip.10in

{\footnotesize \it
1 School of Mathematical Sciences, Qufu Normal University, Qufu, Shandong 273165, P.R. China\\
2 School of Statistics and Data Science, Qufu Normal University, Qufu, Shandong 273165, P.R. China\\}

\end{center}

\noindent\footnotetext{
$^{*}$ Corresponding author.\\
E-mail address: bihuili0810@126.com
}

\begin{abstract}
In this work, we investigate a reaction-diffusion system in which both species are influenced by self-diffusion.
Due to Hopf's boundary lemma, we obtain the boundedness of the classical solution of the system.
By considering a particular function,
we provide a complete characterization of the parameter ranges
such that coexisting solutions of the system do not exist under three boundary conditions.
Then based on the maximum principle,
a sufficient condition for the existence of constant coexisting solutions of the system
under Neumann boundary conditions was derived.
\end{abstract}

\noindent {\bf Key words:} Reaction-diffusion system; steady-state; existence; boundary condidtion.
\vskip.10in
\noindent {\bf AMS(2020) Subject Classification} 35B50; 35J57.

\baselineskip=18pt


\section{Introduction}

In this work, we study the steady-state solutions of the following competing systems with cross-diffusions and self-diffusions,
\begin{align}\label{skt}
\left\{
\begin{aligned}
&\frac{\partial u}{\partial t}=\Delta[(d_{1}+a_{11}u+a_{12} v)u]+u(1-u-a_1v),&~~&x\in\Omega, t>0,\\
&\frac{\partial v}{\partial t}=\Delta[(d_{2}+a_{21}u+a_{22} v)v]+v(1-a_2u-v),&~~&x\in\Omega, t>0,\\
&\alpha_1u+\beta_1\frac{\partial u}{\partial\nu}=\alpha_2v+\beta_2\frac{\partial v}{\partial\nu}=0,&~~&x\in\partial\Omega, t>0,\\
&u(x,0)=u_{0}(x)\geq 0,~v(x,0)=v_{0}(x)\geq 0,&~~&x\in\Omega,
\end{aligned}
\right.
\end{align}
where $\Omega\subset \mathbb{R}^{n}(n\geq1)$ is a bounded domain with smooth boundary,
and satisfies the interior ball condition at any $x\in\partial\Omega$.
$u$ and $v$ are the densities of two competing species,
$\alpha_i, \beta_i$ and $a_{ij} (i,j=1,2)$ are nonnegative constants,
$\alpha_i$ and $\beta_i$ ($i=1,2$) cannot be zero at same time,
$a_{i}$ and $d_{i} (i=1,2)$ are all positive constants,
$a_{11}$ and $a_{22}$ stand for the self-diffusion pressures,
while $a_{12}$ and $a_{21}$ are the cross-diffusion pressures,
$a_1, a_2$ describe the inter-specific competitions,
and $d_{1}, d_{2}$ are their diffusion rates \cite{z2021}.

In order to analyze and describe the above reaction-diffusion model,
it is necessary to clarify the boundary conditions of the region.
When $\alpha_i=0~(i=1,2)$, we have $\frac{\partial u}{\partial\nu}=\frac{\partial v}{\partial\nu}=0$,
which is called Neumann boundary condition.
At this point, individuals who reach the boundary will be reflected back into the region without leaving,
meaning that the species is in an isolated environment.
When $\beta_i=0~(i=1,2)$, we have $u=v=0$.
These indicate that individuals who encounter the boundary cross it immediately and thereby
maintain the density on the boundary at zero \cite{cc2003}.
This means that the boundary $\partial\Omega$ can effectively absorb all individuals encountering it.
Therefore, this boundary is called absorbed, also known as Dirichlet boundary condition.
In population dynamics, the Dirichlet boundary condition can sometimes be considered a lethal boundary,
because it can be interpreted as meaning that all individuals who encounter $\partial\Omega$ die.
When $\alpha_i, \beta_i>0~(i=1,2)$, it is called Robin boundary condition in the mathematical literature.

The system \eqref{skt} was first introduced by Shigesada, Kawasaki and Teramoto in 1979 \cite{skt1979},
when they took a nonlinear dispersive force and an environmental potential function into consideration.
Since the introduction of the SKT model,
numerous experts and scholars have conducted extensive and in-depth research on it.
These results mainly include the existence, boundedness and global convergence of classical solutions,
the local and global existence of weak solutions,
the existence, nonexistence and stability of steady-state solutions,
the existence of traveling wave solutions, and so on.

In fact, the study of standard SKT models is quite difficult.
More experts and scholars are turning to the study of certain special forms of SKT models.
\cite{kim1984,d1987,shim2002,lw2015} investigated the existence and boundedness of smooth solutions for SKT model
without considering self-diffusion effects.
Among them, when the spatial dimension is 1 and $d_1=d_2$,
\cite{kim1984} proved the global existence of smooth solutions.
Under the same conditions, \cite{shim2002} obtained the uniform boundedness and convergence of smooth solutions.
In 2015, Lou and Winkler \cite{lw2015} used the comparison principle and Sobolev regularity theory to obtain
the global existence and uniform boundedness of smooth solutions on bounded convex domain
when the spatial dimension is less than 3 and $d_1=d_2$.
Taking self-diffusion into consideration,
\cite{hnp2015} proved the global existence of the unique smooth solution in any spatial dimension;
and \cite{lz2005} obtained the global existence of the unique classical solution by using Sobolev embedding theory
under the condition of $d_1=d_2$.

For research on steady-state solutions, one can refer to \cite{m1981,k1993,ln1996,ln1999,ltw2017}.
By bifurcations and singular perturbation methods,
\cite{m1981} proved  the existence of nonconstant positive steady-state solutions for systems on intervals.
In 1996, Lou and Ni \cite{ln1996} used the maximum principle and Lyapunov functional theory
to prove that in the weak competition case, if self-diffusion and/or cross-diffusion are relatively weaker than diffusion,
then there is still no nonconstant steady-state solution.
At the same time, they proved that in the weak competition case,
with one of the cross-diffusion pressures arbitrarily given but fixed,
it is expect to find non-constant steady-state solutions if the other cross-diffusion pressure is large enough \cite{ln1996}.
Without considering the influence of self-diffusion,
\cite{ln1999} obtained a sufficient condition such that the SKT model has no nonconstant steady-state solutions.
When $a_{21}=a_{22}=0$, Lou et al. \cite{ltw2017} provided the parameters ranges such that
the system has no nonconstant positive solutions for $a_{11}=0$ and $a_{11}\neq0$, respectively.

It is obvious to see that the above studies were conducted under Neumann boundary conditions,
which is also the most extensively studied scenario.
In addition, some scholars have also studied the SKT model under Dirichlet boundary conditions,
which can be found in \cite{d1991,ruan1996,wu2002,zt} and references therein.
The sufficient conditions for the existence of positive steady-state solutions of the system
under Dirichlet boundary conditions are given in \cite{ruan1996} using the fixed point theory
in the case of fixed or sufficiently large cross-diffusion coefficients, respectively.
The existence of steady-state solutions for a one-dimensional system
was studied using the singular perturbation method \cite{wu2002}.

In this work, we consider the following model, which indicates that
there are self-diffusions in both competing species and there is no cross-diffusion in either,
\begin{align}\label{equ}
\left\{
\begin{aligned}
&\Delta[(d_1+a_{11}u)u]+u(1-u-a_{1}v)=0,&~~&x\in\Omega,\\
&\Delta[(d_2+a_{22}v)v]+v(1-a_{2}u-v)=0,&~~&x\in\Omega,\\
&\alpha_1u+\beta_1\frac{\partial u}{\partial \nu}=\alpha_2v+\beta_2\frac{\partial v}{\partial \nu}=0,&~~&x\in\partial\Omega.
\end{aligned}
\right.
\end{align}
We aim to obtain sufficient conditions such that the system \eqref{equ} has no coexisting solutions
under three different boundary conditions,
and establish the parameter ranges for the existence of constant coexisting solutions under Neumann boundary conditions.
Considering that $u$ and $v$ represent species densities,
we focus on the nonnegative classical solution $(u,v)$ of \eqref{equ},
which means that $(u,v)\in(C^{1}(\overline{\Omega})\cap C^{2}(\Omega))^{2}$, $u, v\geq0$ in $\overline{\Omega}$,
and satisfies \eqref{equ} in the pointwise sense.

The remainder of this work is organized as follows.
Section 2 gives some basic preliminaries which implies the strict positivity of
the nontrivial solutions of system \eqref{equ}.
In section 3, based on the boundedness of solutions, we obtain two different parameter ranges for nonexistence of
coexisting solutions under three boundary conditions.
In section 4, we establish the sufficient conditions for the existence of constant coexisting solutions
under Neumann boundary conditions.

\section{Preliminaries}

First of all, we can obtain the positivity of $u$ and $v$ in $\Omega$,
which is crucial in subsequent section.

\begin{prop}\label{prop2.1}
Let $(u,v)$ be a nonnegative classical solution of \eqref{equ}.
Then if $u\not\equiv0$, we have $u>0$ in $\Omega$,
and if $v\not\equiv0$, we have $v>0$ in $\Omega$.
\end{prop}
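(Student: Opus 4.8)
The plan is to reduce each scalar equation to a \emph{linear} elliptic equation for a suitable transformed unknown and then invoke the strong maximum principle. I will only treat $u$, since the argument for $v$ is identical after interchanging the roles of the two equations. Introduce the new variable $w := (d_1 + a_{11}u)u$. Because $u \ge 0$, $d_1 > 0$ and $a_{11} \ge 0$, the map $s \mapsto (d_1 + a_{11}s)s$ is strictly increasing on $[0,\infty)$, so $w \ge 0$ in $\overline{\Omega}$, $w = 0$ exactly where $u = 0$, and $w \not\equiv 0$ if and only if $u \not\equiv 0$. Moreover $w \in C^2(\Omega) \cap C^1(\overline{\Omega})$, being a smooth function of $u$. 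Hence it suffices to prove $w > 0$ in $\Omega$.

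The key computation rewrites the first equation of \eqref{equ} as a linear equation for $w$. By the very definition of $w$, the equation reads
\[
\Delta w = -u(1 - u - a_1 v) = u(u + a_1 v - 1) \quad \text{in } \Omega.
\]
Set
\[
\tilde{c}(x) := \frac{u + a_1 v - 1}{d_1 + a_{11}u},
\]
which is continuous and bounded on $\overline{\Omega}$: the numerator is bounded because $u,v \in C^1(\overline{\Omega})$, and the denominator satisfies $d_1 + a_{11}u \ge d_1 > 0$, so no division by zero occurs. Multiplying through, $\tilde{c}\,w = u(u + a_1 v - 1) = \Delta w$, so $w$ solves the linear equation $\Delta w - \tilde{c}(x) w = 0$ in $\Omega$. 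Choosing a constant $K \ge \max\{0,\ \sup_{\overline{\Omega}} \tilde{c}\}$, we obtain the differential inequality
\[
\Delta w - K w = (\tilde{c} - K) w \le 0 \quad \text{in } \Omega,
\]
using $w \ge 0$ and $\tilde{c} - K \le 0$.

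It remains to apply the strong minimum principle to the operator $\Delta - K$, whose zeroth order coefficient $-K \le 0$. Suppose, for contradiction, that $w$ vanishes at some interior point $x_0 \in \Omega$. Then $w$ attains its nonpositive minimum value $0$ at the interior point $x_0$, and since $\Omega$ is connected the strong minimum principle forces $w \equiv 0$ in $\Omega$, hence $u \equiv 0$, contradicting $u \not\equiv 0$. Therefore $w > 0$, and consequently $u > 0$, throughout $\Omega$. The same reasoning applied to the second equation with $w := (d_2 + a_{22}v)v$ yields $v > 0$ in $\Omega$ whenever $v \not\equiv 0$. The one point requiring care---and the only genuine obstacle---is the passage from the quasilinear self-diffusion term to a bona fide linear equation with bounded coefficient $\tilde{c}$; once the uniform lower bound $d_1 + a_{11}u \ge d_1 > 0$ guarantees that $\tilde{c}$ does not blow up, the maximum principle applies verbatim.
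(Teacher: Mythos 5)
Your proof is correct, and while it rests on the same basic mechanism as the paper's---assume an interior zero of $u$, apply the strong maximum principle to the linearized equation, and conclude $u\equiv0$, a contradiction---the linearization step is genuinely different. The paper expands $\Delta[(d_1+a_{11}u)u]$ and applies the maximum principle to $u$ itself, using the operator $Lu=-(d_1+2a_{11}u)\Delta u-2a_{11}|\nabla u|^2+cu$ with $c=u+a_1v$; splitting the reaction term as $u-(u+a_1v)u$ makes the zeroth-order coefficient nonnegative from the start, so no sign correction is needed, but the term $-2a_{11}|\nabla u|^2$ must be read as a first-order term $b\cdot\nabla u$ with coefficient $b=-2a_{11}\nabla u$ depending on the solution (harmless, since $\nabla u$ is continuous, though the paper does not comment on this). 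You instead pass to the transformed unknown $w=(d_1+a_{11}u)u$, which converts the quasilinear equation into the genuinely linear equation $\Delta w=\tilde c\,w$ with no gradient term at all; the price is that $\tilde c$ may be negative, so you need the standard constant-shift trick (working with $\Delta-K$, $K\ge\max\{0,\sup_{\overline\Omega}\tilde c\}$) together with the elementary observation that $w$ and $u$ have the same zero set because $d_1+a_{11}u\ge d_1>0$. Both routes are sound and of comparable length: yours buys a cleaner linear equation and sidesteps any discussion of solution-dependent first-order coefficients, while the paper's avoids the change of unknown and the shift.
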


\begin{proof}
We only prove $u>0$ in $\Omega$ whenever $u\not\equiv0$,
since the positivity of $v$ in $\Omega$ can be proved in a similar way.
Otherwise, there is $x_{0}\in\Omega$ such that $u(x_{0})=\min\limits_{x\in\overline{\Omega}}u(x)=0$.

It follows from the first equation of \eqref{equ} that
$$(d_1+2a_{11}u)\Delta u+2a_{11}|\nabla u|^2+u(1-u-a_1v)=0.$$
Let $$Lu=-(d_1+2a_{11}u)\Delta u-2a_{11}|\nabla u|^2+cu~~~{\rm with}~~~c=u+a_{1}v.$$
Then $$c\geq0~~{\rm and}~~Lu=u\geq0~~{\rm in}~\Omega.$$
So, an application of the strong maximum principle shows that $u$ is constant in $\Omega$,~and thus $u=0$,
a contradiction to $u\not\equiv0$.
This completes the proof.
\end{proof}

\begin{rem}\label{rem2.1}
In the case of Neumann or Robin boundary conditions,
we can get further that $u,v>0$ in $\overline{\Omega}$  by Hopf's boundary lemma.
In fact, for example, considering the case of Robin boundary conditions,
suppose that there is $x_{0}\in\overline{\Omega}$ such that $u(x_{0})=\min\limits_{x\in\overline{\Omega}}u(x)=0$.
If $x_0\in\Omega$, we can directly derive a contradiction by Proposition \ref{prop2.1}.
If $x_0\in\partial\Omega$, then $u(x_0)< u(x)$ for all $x\in\Omega$.
Since $Lu\geq0$ in $\Omega$ and $\Omega$ satisfies the interior ball condition at $x_0\in\partial\Omega$,
it follows from Hopf's boundary lemma that $\frac{\partial u}{\partial\nu}(x_0)<0$.
Hence
$$\alpha_1u(x_0)+\beta_1\frac{\partial u}{\partial\nu}(x_0)<0,$$
a condiction. Therefore, $u>0$ in $\overline{\Omega}$.
\end{rem}

\section{Nonexistence of coexisting steady-state solutions}

In this section, we will discuss the nonexistence of coexisting solutions for system \eqref{equ}
under three different boundary conditions.

\subsection{Neumann boundary condition}

In this subsection, assume that $\alpha_1=\alpha_2=0$, that is,
we consider the following system,
\begin{align}\label{neu}
\left\{
\begin{aligned}
&\Delta[(d_1+a_{11}u)u]+u(1-u-a_{1}v)=0,&~~&x\in\Omega,\\
&\Delta[(d_2+a_{22}v)v]+v(1-a_{2}u-v)=0,&~~&x\in\Omega,\\
&\frac{\partial u}{\partial \nu}=\frac{\partial v}{\partial \nu}=0,&~~&x\in\partial\Omega.
\end{aligned}
\right.
\end{align}
Firstly, we give the following lemma, which indicates that $u$ and $v$ are both bounded.

\begin{lemma}\label{lem3.1}
Suppose that $(u,v)$ is a nonnegative classical solution of \eqref{neu}.
If $u\not\equiv0$, then $u\leq1$ in $\overline{\Omega}$.
Similarly, if $v\not\equiv0$, then $v\leq1$ in $\overline{\Omega}$.
\end{lemma}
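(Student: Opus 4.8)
The plan is to apply the maximum principle to the transformed unknown $w:=(d_1+a_{11}u)u$, which turns the quasilinear first equation of \eqref{neu} into the semilinear Poisson-type equation $-\Delta w=u(1-u-a_1v)$ in $\Omega$, while the Neumann condition becomes $\frac{\partial w}{\partial\nu}=(d_1+2a_{11}u)\frac{\partial u}{\partial\nu}=0$ on $\partial\Omega$. Since $\phi(s):=d_1 s+a_{11}s^2$ is strictly increasing on $[0,\infty)$, a point maximizes $w$ over $\overline\Omega$ exactly when it maximizes $u$; I would let $x_0$ be such a maximizer and set $M:=u(x_0)=\max_{\overline\Omega}u$. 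By Proposition \ref{prop2.1}, $u>0$ in $\Omega$ since $u\not\equiv0$. The goal is to show $M\le1$, and I would split into the interior and boundary cases.

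If $x_0\in\Omega$, then $x_0$ is an interior maximum of $w$, so $\Delta w(x_0)\le0$ and hence $u(x_0)(1-u(x_0)-a_1v(x_0))\ge0$. Because $u(x_0)=M>0$ and $v(x_0)\ge0$, this forces $M\le 1-a_1v(x_0)\le1$, as desired. Note here that the transformed operator is uniformly elliptic, $d_1+2a_{11}u\ge d_1>0$, which is the harmless bookkeeping making the second-derivative test legitimate.

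The delicate case, and the one for which Hopf's boundary lemma is needed (as the statement anticipates), is $x_0\in\partial\Omega$. Here I would argue by contradiction, assuming $M>1$. By continuity there is a ball $B=B_r(x_0)$ on which $u>1$; then on $B\cap\Omega$ one has $-\Delta w=u(1-u-a_1v)<0$, so $w$ is strictly subharmonic there. A strictly subharmonic function admits no interior maximum, so on the interior ball tangent to $\partial\Omega$ at $x_0$ (which exists by the interior ball condition and may be taken inside $B$) we get $w<w(x_0)$ throughout the interior, with the maximum attained at the boundary point $x_0$. Hopf's boundary lemma then yields $\frac{\partial w}{\partial\nu}(x_0)>0$, contradicting $\frac{\partial w}{\partial\nu}(x_0)=0$ read off from the Neumann condition. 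Hence $M\le1$ and $u\le1$ in $\overline\Omega$; the bound $v\le1$ follows verbatim by exchanging the roles of the two equations.

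The only genuine obstacle is the boundary case: at a boundary maximum one cannot conclude $\Delta w\le0$ directly, which is precisely why the Neumann condition must be fed into Hopf's lemma rather than into a naive interior extremum test. Once the strict subharmonicity of $w$ on the region $\{u>1\}$ is established, the contradiction between Hopf's strict sign and the vanishing normal derivative closes the argument.
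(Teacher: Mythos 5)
Your proof is correct and follows essentially the same route as the paper: the paper applies Proposition 2.2 of Lou--Ni \cite{ln1996} to exactly the function $w=(d_1+a_{11}u)u$, and that cited proposition is precisely the maximum-principle statement you prove from scratch (second-derivative test at an interior maximizer of $w$, Hopf's boundary lemma with the interior ball condition at a boundary maximizer). The only difference is that your argument is self-contained where the paper invokes the Lou--Ni result as a black box.
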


\begin{proof}
We only prove the boundedness of $u$,
while the boundedness of $v$ can be similarly obtained, leaving it for interested readers.
Rewrite the first equation of the system \eqref{neu} as follows
\begin{align}\label{3.2}
\left\{
\begin{aligned}
&(d_1+2a_{11}u)\Delta u+2a_{11}|\nabla u|^2+u(1-u-a_{1}v)=0,&~~&x\in\Omega,\\
&\frac{\partial u}{\partial \nu}=0,&~~&x\in\partial\Omega.
\end{aligned}
\right.
\end{align}
Let $f(x)=u(x)(1-u(x)-a_{1}v(x))$, $x\in\Omega$.
Due to Proposition 2.2 in \cite{ln1996}, there exist $x_1,x_2\in\overline{\Omega}$ such that
$$u(x_1)=\max_{x\in\overline{\Omega}}u(x)\triangleq M,~~u(x_2)=\min_{x\in\overline{\Omega}}u(x)\triangleq m,$$
and 
$$f(x_1)\geq0,~~~f(x_2)\leq0,$$ 
that is
$$M(1-M-a_{1}v(x_1))\geq0,~~m(1-m-a_{1}v(x_2))\leq0.$$
Then we have
$$1-a_{1}v(x_2)\leq m\leq u(x)\leq M\leq1-a_{1}v(x_1) \mbox{~~for~all~}x\in\overline{\Omega}.$$
Combining this with the nonnegativity of $v$, we obtain $u\leq1$ in $\overline{\Omega}$.
\end{proof}

\begin{thm}\label{thm3.1}
Suppose that $(u,v)$ is a nonnegative classical solution of \eqref{neu}. If
$$\mbox{(i)}~a_1>1>a_2, d_1>d_2 \mbox{~and~} d_1\geq d_2+2a_{22}$$
or
$$\mbox{(ii)}~a_1<1<a_2, d_1<d_2 \mbox{~and~}d_2\geq d_1+2a_{11},$$
then system \eqref{neu} has no coexisting solutions, that is, at least one species is extinct.
\end{thm}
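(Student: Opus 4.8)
The plan is to argue by contradiction. Suppose $(u,v)$ is a coexisting solution, so $u\not\equiv0$ and $v\not\equiv0$. By Proposition \ref{prop2.1} together with Remark \ref{rem2.1} we have $u,v>0$ on $\overline{\Omega}$, and by Lemma \ref{lem3.1} we have $0<u,v\le1$ on $\overline{\Omega}$. The two hypotheses (i) and (ii) are interchanged by the substitution $(u,v,a_1,a_2,d_1,d_2,a_{11},a_{22})\mapsto(v,u,a_2,a_1,d_2,d_1,a_{22},a_{11})$, which maps system \eqref{neu} to itself; hence it suffices to treat case (i), and case (ii) follows verbatim with the roles of the two species exchanged.

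First I would reuse the extremal-point machinery already invoked in Lemma \ref{lem3.1}. Applying Proposition 2.2 of \cite{ln1996} to the first equation gives a point $x_1$ with $u(x_1)=\max_{\overline\Omega}u=:M$ and $1-M-a_1v(x_1)\ge0$; applying it to the second equation gives a point $x_4$ with $v(x_4)=\min_{\overline\Omega}v=:m$ and $1-a_2u(x_4)-m\le0$. Using $v(x_1)\ge m$, $u(x_4)\le M$ and the positivity of $u,v$, these two sign conditions yield the scalar inequalities $M\le1-a_1m$ and $m\ge1-a_2M$. Eliminating $m$ between them produces
\[
a_1-1\le M(a_1a_2-1).
\]

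The remaining step is to read off a contradiction from this inequality using $a_1>1>a_2$ and $0<M\le1$. When $a_1a_2<1$ the right-hand side is negative while the left-hand side is positive; when $a_1a_2=1$ it reads $0<a_1-1\le0$; and when $a_1a_2>1$ it forces $M\ge(a_1-1)/(a_1a_2-1)>1$, the last inequality being equivalent to $a_2<1$, which contradicts $M\le1$. In every case we reach a contradiction, so no coexisting solution exists.

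I expect the main obstacle to be the branch $a_1a_2>1$, where the contradiction relies crucially on the global bound $M\le1$ supplied by Lemma \ref{lem3.1} rather than on the reaction terms alone, so that the argument above appears to use only the competition conditions together with this bound. The abstract's reference to ``a particular function'' suggests that the authors instead encode the competition through the comparison function $\Phi=(d_1+a_{11}u)u-(d_2+a_{22}v)v$, which satisfies $\Delta\Phi+\big(u(1-u-a_1v)-v(1-a_2u-v)\big)=0$ with homogeneous Neumann data; here the conditions $d_1>d_2$ and $d_1\ge d_2+2a_{22}$ guarantee $d_1+2a_{11}u\ge d_2+2a_{22}v$ throughout $u\ge0$, $0\le v\le1$, which is what lets one convert the sign of $\Phi$ at its extrema into a pointwise ordering of $u$ and $v$. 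If I followed that route I would combine such an ordering with the extremal sign conditions above, and the delicate point would again be arranging the bookkeeping so that the contradiction is obtained uniformly across $a_1a_2<1$ and $a_1a_2>1$.
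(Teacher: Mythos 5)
Your proposal is correct, but it takes a genuinely different route from the paper. The paper proves Theorem \ref{thm3.1} by an integral identity: under hypothesis (i) it shows that the vector field $(uv\nabla w_{1}-uv\nabla w_{2}+vw_{1}\nabla u-uw_{2}\nabla v)\frac{u}{v}$, with $w_1=d_1+a_{11}u$ and $w_2=d_2+a_{22}v$, has strictly positive divergence in $\Omega$ --- this is precisely where $d_1>d_2$, $d_1\geq d_2+2a_{22}$ and the bound $v\leq1$ of Lemma \ref{lem3.1} enter, to make a quadratic expression in $(\nabla u,\nabla v)$ nonnegative --- while the divergence theorem and the Neumann data force the integral of that divergence over $\Omega$ to vanish. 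You instead run the extremal-point machinery of Lemma \ref{lem3.1} (Proposition 2.2 of \cite{ln1996}) on both equations simultaneously, exactly as the paper itself does later in Theorem \ref{thm4.1} for the weak-competition case: the sign conditions at a maximum of $u$ and a minimum of $v$ (legitimate because $u,v>0$ on $\overline{\Omega}$ by Remark \ref{rem2.1}, so the factors $u(x_1)$ and $v(x_4)$ may be cancelled) give $M\leq1-a_1m$ and $m\geq1-a_2M$, hence $M(1-a_1a_2)\leq1-a_1<0$, and each of the three cases $a_1a_2<1$, $a_1a_2=1$, $a_1a_2>1$ is impossible, the last because $M\geq(a_1-1)/(a_1a_2-1)>1$ contradicts $M\leq1$; your algebra and the symmetry reduction of case (ii) to case (i) both check out. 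What your route buys: it is more elementary (no divergence identity), and it is strictly stronger, since you never use $d_1>d_2$ or $d_1\geq d_2+2a_{22}$, so under Neumann conditions the competition hypothesis $a_1>1>a_2$ alone excludes coexistence --- consistent with competitive exclusion in the weak--strong regime. What the paper's route buys: the divergence argument transfers to the Dirichlet and Robin problems (Theorems \ref{thm3.2} and \ref{thm3.3}), whereas your argument is Neumann-specific; under Dirichlet conditions $\min_{\overline{\Omega}}v=0$ is attained on $\partial\Omega$, the sign condition at that point becomes vacuous, and the inequality $m\geq1-a_2M$ can no longer be derived. Finally, your closing speculation about the paper encoding competition through $\Phi=(d_1+a_{11}u)u-(d_2+a_{22}v)v$ is not what the paper does, but since it is offered only as a guess about the authors' method, it does not affect the validity of your proof.
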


\begin{proof}
(i) We argue by contradiction. Suppose that $u\not\equiv0$ and $v\not\equiv0$.
It follows from Proposition \ref{prop2.1} that $u,v>0$ in $\Omega$,
which allows us to rewrite system \eqref{neu} as follows,
\begin{align}\label{3.3}
\left\{
\begin{aligned}
&\frac{\Delta[(d_{1}+a_{11}u)u]}{u}=-1+u+a_{1}v,&~~&x\in\Omega,\\
&\frac{\Delta[(d_{2}+a_{22}v)v]}{v}=-1+a_{2}u+v,&~~&x\in\Omega,\\
&\frac{\partial u}{\partial \nu}=\frac{\partial v}{\partial \nu}=0,&~~&x\in\partial\Omega.\\
\end{aligned}
\right.
\end{align}
Let
\begin{align}\label{3.3+1}
w_{1}=d_{1}+a_{11}u\mbox{~~and~~}w_{2}=d_{2}+a_{22}v.
\end{align}
Since $a_1>1>a_2$, we can obtain that $\frac{\Delta (w_{1}u)}{u}>\frac{\Delta (w_{2}v)}{v}$, that is,
\begin{align}\label{3.4}
\frac{u\Delta w_{1}+2\nabla u\cdot\nabla w_{1}+w_{1}\Delta u}{u}>
\frac{v\Delta w_{2}+2\nabla v\cdot\nabla w_{2}+w_{2}\Delta v}{v}~~~\mbox{in}~\Omega.
\end{align}
We procedure the following calculation based on the inequality above,
\begin{align*}
&\mbox{div}\Big[(uv\nabla w_{1}-uv\nabla w_{2}+vw_{1}\nabla u-uw_{2}\nabla v)\frac{u}{v}\Big]\\
=&\big(uv\Delta w_{1}+v\nabla u\cdot\nabla w_{1}+u\nabla v\cdot\nabla w_{1}-uv\Delta w_{2}
-v\nabla u\cdot\nabla w_{2}-u\nabla v\cdot\nabla w_{2}\\
&+vw_{1}\Delta u+v\nabla u\cdot\nabla w_{1}+w_{1}\nabla u\cdot\nabla v-uw_{2}\Delta v
-u\nabla v\cdot\nabla w_{2}-w_{2}\nabla u\cdot\nabla v\big)\frac{u}{v}\\
&+\big(uv\nabla w_{1}-uv\nabla w_{2}+vw_{1}\nabla u-uw_{2}\nabla v\big)
\cdot\Big(\frac{1}{v}\nabla u-\frac{u}{v^2}\nabla v\Big)\\
>&\big(u\nabla v\cdot\nabla w_{1}-v\nabla u\cdot\nabla w_{2}
+w_{1}\nabla u\cdot\nabla v-w_{2}\nabla u\cdot\nabla v\big)\frac{u}{v}\\
&+\big(uv\nabla w_{1}-uv\nabla w_{2}+vw_{1}\nabla u-uw_{2}\nabla v\big)
\cdot\Big(\frac{1}{v}\nabla u-\frac{u}{v^2}\nabla v\Big)\\
=&|\nabla u|^{2}\big(a_{11}u+w_{1}\big)+|\nabla v|^{2}\big(a_{22}v+w_{2}\big)\frac{u^2}{v^2}\\
&+\nabla u\cdot\nabla v
\Big[(a_{11}u+w_1)\frac{u}{v}+(-a_{22}v-w_2)\frac{u}{v}+(-a_{11}u-w_1-a_{22}v-w_2)\frac{u}{v}\Big]\\
=&|\nabla u|^{2}\big(d_{1}+2a_{11}u\big)+|\nabla v|^{2}\big(d_{2}+2a_{22}v\big)\frac{u^2}{v^2}
-2(d_{2}+2a_{22}v)\frac{u}{v}\nabla u\cdot\nabla v\\
=&|\nabla u|^{2}\big(d_{1}+2a_{11}u-d_{2}-2a_{22}v\big)
+\left(\sqrt{d_{2}+2a_{22}v}\nabla u-\frac{u}{v}\sqrt{d_{2}+2a_{22}v}\nabla v\right)^2
\end{align*}
Due to $d_1>d_2$, $d_1\geq d_2+2a_{22}$ and Lemma \ref{lem3.1},
we have 
$$d_{1}+2a_{11}u-d_{2}-2a_{22}v\geq d_{1}+2a_{11}u-d_{2}-2a_{22}\geq0 \mbox{~~~in~} \Omega.$$ 
Thus,
\begin{align}\label{3.5}
\mbox{div}\Big[(uv\nabla w_{1}-uv\nabla w_{2}+vw_{1}\nabla u-uw_{2}\nabla v)\frac{u}{v}\Big]>0.
\end{align}

On the other hand, we can see from Neumann boundary conditions that
\begin{align*}
&\int_\Omega\mbox{div}\left[\big(uv\nabla w_{1}-uv\nabla w_{2}+v w_{1}\nabla u-u w_{2}\nabla v\big)
\frac{u}{v}\right]\mathrm{d}x\\
=&\int_{\partial\Omega}\left(uv\frac{\partial w_{1}}{\partial\nu}-uv\frac{\partial w_{2}}{\partial\nu}
+v w_{1}\frac{\partial u}{\partial\nu}-u w_{2}\frac{\partial v}{\partial\nu}\right)\frac{u}{v}\mathrm{d}S\\
=&\int_{\partial\Omega}\left(a_{11}uv\frac{\partial u}{\partial\nu}-a_{22}uv\frac{\partial v}{\partial\nu}
+v w_{1}\frac{\partial u}{\partial\nu}-u w_{2}\frac{\partial v}{\partial\nu}\right)\frac{u}{v}\mathrm{d}S\\
=&0,
\end{align*}
which is contradict to \eqref{3.5}. Therefore, $u\equiv0$ or $v\equiv0$.

(ii) In this case, we can see that $\frac{\Delta (w_{1}u)}{u}<\frac{\Delta (w_{2}v)}{v}$ due to $a_1<1<a_2$,
which can be expanded into the following form
\begin{align}\label{3.6}
\frac{u\Delta w_{1}+2\nabla u\cdot\nabla w_{1}+w_{1}\Delta u}{u}<
\frac{v\Delta w_{2}+2\nabla v\cdot\nabla w_{2}+w_{2}\Delta v}{v}~~~\mbox{in}~\Omega,
\end{align}
where $w_1,w_2$ are defined in \eqref{3.3+1}.
Similar to the calculations of (i), we consider the process as follows
\begin{align*}
&\mbox{div}\Big[(uv\nabla w_{1}-uv\nabla w_{2}+vw_{1}\nabla u-uw_{2}\nabla v)\frac{v}{u}\Big]\\
<&\big(u\nabla v\cdot\nabla w_{1}-v\nabla u\cdot\nabla w_{2}
+w_{1}\nabla u\cdot\nabla v-w_{2}\nabla u\cdot\nabla v\big)\frac{v}{u}\\
&+\big(uv\nabla w_{1}-uv\nabla w_{2}+vw_{1}\nabla u-uw_{2}\nabla v\big)
\cdot\Big(-\frac{v}{u^2}\nabla u+\frac{1}{u}\nabla v\Big)\\
=&|\nabla u|^{2}\big(-a_{11}u-w_{1}\big)\frac{v^2}{u^2}+|\nabla v|^{2}\big(-a_{22}v-w_{2}\big)\\
&+\nabla u\cdot\nabla v
\Big[(a_{11}u+w_1)\frac{v}{u}+(-a_{22}v-w_2)\frac{v}{u}+(a_{11}u+w_1+a_{22}v+w_2)\frac{v}{u}\Big]\\
=&-|\nabla u|^{2}\big(d_{1}+2a_{11}u\big)\frac{v^2}{u^2}-|\nabla v|^{2}\big(d_{2}+2a_{22}v\big)
+2(d_{1}+2a_{11}u)\frac{v}{u}\nabla u\cdot\nabla v\\
=&|\nabla v|^{2}\big(-d_{2}-2a_{22}v+d_{1}+2a_{11}u\big)
-\left(\frac{v}{u}\sqrt{d_{1}+2a_{11}u}\nabla u-\sqrt{d_{1}+2a_{11}u}\nabla v\right)^2
\end{align*}
Due to $d_1<d_2$, $d_2\geq d_1+2a_{11}$ and Lemma \ref{lem3.1},
we have 
$$-d_{2}-2a_{22}v+d_{1}+2a_{11}u\leq-d_{2}-2a_{22}v+d_{1}+2a_{11}\leq0 \mbox{~~~in~} \Omega.$$ 
Thus
\begin{align}\label{3.7}
\mbox{div}\Big[(uv\nabla w_{1}-uv\nabla w_{2}+vw_{1}\nabla u-uw_{2}\nabla v)\frac{v}{u}\Big]<0.
\end{align}

Now, based on the boundary conditions, we can still obtain the following result
\begin{align*}
&\int_\Omega
\mbox{div}\left[\big(uv\nabla w_{1}-uv\nabla w_{2}+v w_{1}\nabla u-u w_{2}\nabla v\big)\frac{v}{u}\right]\mathrm{d}x=0,
\end{align*}
which leads to a contradiction. This completes the proof.
\end{proof}

\subsection{Dirichlet boundary condition}

In this subsection, we consider the case of $\beta_1=\beta_2=0$, which leads to the following system,
\begin{align}\label{dir}
\left\{
\begin{aligned}
&\Delta[(d_1+a_{11}u)u]+u(1-u-a_{1}v)=0,&~~&x\in\Omega,\\
&\Delta[(d_2+a_{22}v)v]+v(1-a_{2}u-v)=0,&~~&x\in\Omega,\\
&u=v=0,&~~&x\in\partial\Omega.
\end{aligned}
\right.
\end{align}
Now, we can also deduce that $u$ and $v$ are both bounded.

\begin{lemma}\label{lem3.2}
Suppose that $(u,v)$ is a nonnegative classical solution of \eqref{dir}.
If $u\not\equiv0$, then $u\leq1$ in $\overline{\Omega}$.
Similar, if $v\not\equiv0$, then $v\leq1$ in $\overline{\Omega}$.
\end{lemma}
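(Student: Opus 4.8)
The plan is to follow the strategy of Lemma \ref{lem3.1}, but to exploit the fact that the Dirichlet condition forces the maximum of $u$ into the interior, which in fact simplifies the argument. First I would invoke Proposition \ref{prop2.1}: since $u\not\equiv0$, we have $u>0$ in $\Omega$, while $u=0$ on $\partial\Omega$. Consequently $M:=\max_{\overline{\Omega}}u>0$, and because $u$ vanishes on the boundary but is strictly positive inside, this maximum must be attained at some interior point $x_1\in\Omega$ rather than on $\partial\Omega$. This is the structural difference from the Neumann case: there I had to locate the extremal points via Proposition 2.2 of \cite{ln1996} and keep track of both a maximum and a minimum, whereas here an interior maximum is immediate and the lower bound on the boundary is handed to us by the Dirichlet data.

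Next I would rewrite the first equation of \eqref{dir} in non-divergence form,
$$(d_1+2a_{11}u)\Delta u+2a_{11}|\nabla u|^2+u(1-u-a_1v)=0\quad\text{in }\Omega,$$
and evaluate it at the interior maximum $x_1$. At an interior maximum the first-order condition gives $\nabla u(x_1)=0$, so the nonlinear gradient term $2a_{11}|\nabla u|^2$ vanishes there, while the second-order condition gives $\Delta u(x_1)\le0$. Since $d_1+2a_{11}u(x_1)>0$, the surviving term $(d_1+2a_{11}M)\Delta u(x_1)$ is nonpositive, whence
$$u(x_1)\big(1-u(x_1)-a_1v(x_1)\big)=-(d_1+2a_{11}M)\Delta u(x_1)\ge0.$$
Dividing by $M=u(x_1)>0$ yields $1-M-a_1v(x_1)\ge0$, and since $a_1v(x_1)\ge0$ by the nonnegativity of $v$, I conclude $M\le1$, that is, $u\le1$ in $\overline{\Omega}$. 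The bound on $v$ then follows from the symmetric argument applied to the second equation.

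The argument is essentially free of obstacles once the location of the maximum is pinned down; the only point requiring care is the justification that the maximum is interior. I expect the main thing to verify is that $u\not\equiv0$ together with the Dirichlet condition genuinely rules out a boundary maximum, and this rests on the strict interior positivity from Proposition \ref{prop2.1}, which guarantees $M>0$ and hence excludes the boundary where $u\equiv0$. The vanishing of the $2a_{11}|\nabla u|^2$ term at the critical point is what renders the self-diffusion coefficient $a_{11}$ harmless in this estimate, so that no smallness or sign restriction on $a_{11}$ is needed.
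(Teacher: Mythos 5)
Your proof is correct and takes essentially the same approach as the paper's: both arguments use the Dirichlet condition to force a positive maximum into the interior, pass to the non-divergence form of the equation, and read off the sign of the reaction term at that interior critical point where $\nabla u=0$ and $\Delta u\le0$. The only cosmetic difference is that the paper phrases it as a contradiction (assuming the maximum exceeds $1$, which immediately places it in $\Omega$), while you argue directly; the content is identical.
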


\begin{proof}
We only need to prove the boundedness of $v$.
Proposition \ref{prop2.1} implies that $v>0$ in $\Omega$ if $v\not\equiv0$.
The second equation of system \eqref{dir} can be transformed into the following form,
\begin{align}\label{}
\left\{
\begin{aligned}
&(d_2+2a_{22}v)\Delta v+2a_{22}|\nabla v|^2+v(1-a_2u-v)=0,&~~&x\in\Omega,\\
&v=0,&~~&x\in\partial\Omega.
\end{aligned}
\right.
\end{align}
Suppose on the contrary that there exists a point $x_{0}$ such that $v(x_{0})=\max_{\overline{\Omega}}v(x)>1$.
Obviously, $x_{0}\in \Omega$.
So we have 
$$\Delta v(x_{0})\leq0,~ \nabla v(x_0)=0 \mbox{~and~} v(x_{0})(1-a_{2}u(x_{0})-v(x_{0}))\geq0.$$
Since $v>0$ in $\Omega$, we obtain that $1-a_{2}u(x_{0})-v(x_{0})\geq0$,
that is, $1\geq a_{2}u(x_{0})+v(x_{0})$, a contradiction.
\end{proof}

\begin{thm}\label{thm3.2}
Suppose that $(u,v)$ is a nonnegative classical solution of \eqref{dir}. If
$$\mbox{(i)}~a_1>1>a_2, d_1>d_2 \mbox{~and~} d_1\geq d_2+2a_{22}$$
or
$$\mbox{(ii)}~a_1<1<a_2, d_1<d_2 \mbox{~and~}d_2\geq d_1+2a_{11},$$
then system \eqref{dir} has no coexisting solutions.
\end{thm}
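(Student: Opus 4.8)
The plan is to follow the proof of Theorem \ref{thm3.1} verbatim on the interior of $\Omega$ and to concentrate all the new work on the boundary term, which is where the Dirichlet condition forces a genuinely different argument. First I would argue by contradiction: suppose $u\not\equiv0$ and $v\not\equiv0$, so that Proposition \ref{prop2.1} gives $u,v>0$ in $\Omega$ and system \eqref{dir} may be rewritten, after division by $u$ and $v$, exactly as in the Neumann case. With $w_1,w_2$ as in \eqref{3.3+1}, in case (i) the hypothesis $a_1>1>a_2$ again yields $\frac{\Delta(w_1u)}{u}>\frac{\Delta(w_2v)}{v}$, and the pointwise chain of identities leading to
$$\mbox{div}\Big[(uv\nabla w_{1}-uv\nabla w_{2}+vw_{1}\nabla u-uw_{2}\nabla v)\tfrac{u}{v}\Big]>0\quad\mbox{in }\Omega$$
is word-for-word that of Theorem \ref{thm3.1}, with Lemma \ref{lem3.2} replacing Lemma \ref{lem3.1} to supply $u,v\le1$ and hence $d_1+2a_{11}u-d_2-2a_{22}v\ge0$. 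Denote the bracketed field by $\mathbf{F}$; in case (ii) one uses the multiplier $\frac{v}{u}$ and obtains $\mbox{div}\,\mathbf{F}<0$ in the same way.

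The only obstruction is the boundary integral, and this is the main difficulty. Under Neumann conditions it vanished because $\partial_\nu u=\partial_\nu v=0$; here $u=v=0$ on $\partial\Omega$, but the multiplier $\frac{u}{v}$ is an apparent $\frac{0}{0}$ at the boundary, so I cannot merely substitute. To control it I would first record that Hopf's boundary lemma applies to $v$ exactly as in Remark \ref{rem2.1}: writing the $v$-equation as $Lv\ge0$ with $L$ uniformly elliptic and the quadratic gradient term absorbed into a bounded first-order coefficient, and using $v>0$ in $\Omega$, $v=0$ on $\partial\Omega$ together with the interior ball condition, one gets $\partial_\nu v(x_0)<0$ at every $x_0\in\partial\Omega$. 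By compactness and $v\in C^1(\overline\Omega)$ this yields $v(x)\ge c\,\mbox{dist}(x,\partial\Omega)$ near $\partial\Omega$, while $u\in C^1(\overline\Omega)$ with $u=0$ on $\partial\Omega$ gives $u(x)\le C\,\mbox{dist}(x,\partial\Omega)$. Hence $\frac{u^2}{v}\to0$ as $x\to\partial\Omega$, and inspection of the reduced form
$$\mathbf{F}=u(d_1+2a_{11}u)\nabla u-u^2\Big(2a_{22}+\frac{d_2}{v}\Big)\nabla v$$
shows that $\mathbf{F}$ extends continuously to $\overline\Omega$ with $\mathbf{F}=0$ on $\partial\Omega$.

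Finally I would convert $\mbox{div}\,\mathbf{F}>0$ into a contradiction by exhaustion rather than by a single application of the divergence theorem, which is delicate since $\mathbf{F}$ is only continuous, not $C^1$, up to the boundary. Put $\Omega_\varepsilon=\{x\in\Omega:\mbox{dist}(x,\partial\Omega)>\varepsilon\}$; for each $\varepsilon$ the divergence theorem on the smooth region $\Omega_\varepsilon$ gives $\int_{\Omega_\varepsilon}\mbox{div}\,\mathbf{F}\,\mathrm{d}x=\int_{\partial\Omega_\varepsilon}\mathbf{F}\cdot\nu_\varepsilon\,\mathrm{d}S$. Since $|\mathbf{F}|=O(\varepsilon)$ on $\partial\Omega_\varepsilon$ (from $u\le C\varepsilon$ and $\frac{u^2}{v}\le \frac{C^2}{c}\varepsilon$ there) and $|\partial\Omega_\varepsilon|$ stays bounded, the right-hand side tends to $0$ as $\varepsilon\to0^+$. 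On the other hand the left-hand side is increasing in $1/\varepsilon$ and, by monotone convergence, tends to $\int_\Omega\mbox{div}\,\mathbf{F}\,\mathrm{d}x>0$ because the integrand is strictly positive on the open set $\Omega$. These two limits must coincide, a contradiction, so $u\equiv0$ or $v\equiv0$. Case (ii) is identical after exchanging the roles of $u$ and $v$: one takes the multiplier $\frac{v}{u}$, invokes Hopf's lemma for $u$, and uses $\frac{v^2}{u}\to0$ on $\partial\Omega$. Thus, essentially the entire content beyond Theorem \ref{thm3.1} lies in removing the apparent $\frac{u}{v}$ singularity via Hopf's lemma and in justifying the integration by parts when $\mathbf{F}$ is not $C^1$ up to $\partial\Omega$, both of which the exhaustion argument handles cleanly.
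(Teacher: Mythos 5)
Your proposal is correct, and its overall architecture is the same as the paper's: argue by contradiction, reuse the divergence inequality from Theorem \ref{thm3.1} verbatim (with Lemma \ref{lem3.2} supplying $u,v\le 1$), exhaust $\Omega$ by the sets $\Omega_\varepsilon=\{x\in\Omega:\mathrm{dist}(x,\partial\Omega)>\varepsilon\}$, and show the flux through $\partial\Omega_\varepsilon$ vanishes in the limit. The one genuine difference is how the apparent $\frac{0}{0}$ in the multiplier $\frac{u}{v}$ is resolved. The paper introduces the class $V$ of $C^1$ functions vanishing on $\partial\Omega$ with strictly negative normal derivative, notes $u,v\in V$ by Hopf's lemma, and then cites Lemma 2.4 of the Yi--Zou reference \cite{yz2013} to conclude that the ratio $g=u/v$, extended to $\partial\Omega$ by $\frac{\partial u}{\partial\nu}\big/\frac{\partial v}{\partial\nu}$, is continuous and positive on $\overline{\Omega}$; this makes $u^2/v=u\cdot g$ vanish on the boundary and kills the problematic term $I_2(\varepsilon)$. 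You instead prove the needed boundary behavior from scratch: Hopf's lemma plus compactness of $\partial\Omega$ and $v\in C^1(\overline{\Omega})$ give $v\ge c\,\mathrm{dist}(x,\partial\Omega)$ near the boundary, while $u\le C\,\mathrm{dist}(x,\partial\Omega)$, so $u^2/v=O(\mathrm{dist})$ quantitatively. Your route is self-contained (no external lemma) and the $O(\varepsilon)$ bound on $|\mathbf{F}|$ along $\partial\Omega_\varepsilon$ makes the vanishing of the flux transparent; the paper's citation is shorter but hides the quantitative content. A second, smaller improvement on your side: you pass to the limit in $\int_{\Omega_\varepsilon}\mbox{div}\,\mathbf{F}\,\mathrm{d}x$ by monotone convergence (legitimate since the integrand is positive, and no a priori integrability is needed), whereas the paper invokes the Lebesgue dominated convergence theorem, which strictly speaking presupposes an integrable dominating function that is never exhibited; your formulation closes that loose end.
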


\begin{proof}
(i) According to the proof of Theorem \ref{thm3.1}-(i), the following inequality still holds 
$$\mbox{div}\Big[(uv\nabla w_{1}-uv\nabla w_{2}+vw_{1}\nabla u-uw_{2}\nabla v)\frac{u}{v}\Big]>0,$$
where $w_1=d_1+a_{11}u$ and $w_2=d_2+a_{22}v$.

Now, we consider the following integral
\begin{align*}
&\int_\Omega\mbox{div}\left[\Big(uv\nabla w_{1}-uv\nabla w_{2}+v w_{1}\nabla u-u w_{2}\nabla v\Big)
\frac{u}{v}\right]\mathrm{d}x\\
=&\int_{\partial\Omega}\left(uv\frac{\partial w_{1}}{\partial\nu}-uv\frac{\partial w_{2}}{\partial\nu}
+v w_{1}\frac{\partial u}{\partial\nu}-u w_{2}\frac{\partial v}{\partial\nu}\right)\frac{u}{v}\mathrm{d}S\\
=&\int_{\partial\Omega}\left(a_{11}uv\frac{\partial u}{\partial\nu}-a_{22}uv\frac{\partial v}{\partial\nu}
+v w_{1}\frac{\partial u}{\partial\nu}-u w_{2}\frac{\partial v}{\partial\nu}\right)\frac{u}{v}\mathrm{d}S\\
=&\int_{\partial\Omega}\left(a_{11}u^2\frac{\partial u}{\partial\nu}-a_{22}u^2\frac{\partial v}{\partial\nu}
+uw_{1}\frac{\partial u}{\partial\nu}- \left(d_2\frac{u^2}{v}+a_{22}u^2\right)\frac{\partial v}{\partial\nu}\right)\mathrm{d}S.
\end{align*}
It is easy to see that the function $\frac{u^2}{v}$ in the last term of the integrand doesn't make sense on $\partial\Omega$.
So in such a case we cannot make calculations directly. Let
$$\Omega_{\varepsilon}=\{x\in\Omega\big|{\rm dist}(x,\partial\Omega)>\varepsilon\}~~{\rm for~any~small~}\varepsilon>0.$$
Since $(u,v)\in(C^{1}(\overline{\Omega})\cap C^{2}(\Omega))^{2}$,
we observe that
$$\big(uv\nabla w_{1}-uv\nabla w_{2}+v w_{1}\nabla u-u w_{2}\nabla v\big)\frac{u}{v}
\in C^{1}(\overline{\Omega_{\varepsilon}}).$$
Then divergence theorem implies that
\begin{align*}
&\int_\Omega\mbox{div}\left[\big(uv\nabla w_{1}-uv\nabla w_{2}+v w_{1}\nabla u-u w_{2}\nabla v\big)
\frac{u}{v}\right]\mathrm{d}x\\
=&\lim _{\varepsilon\to0}\int_{\Omega_\varepsilon}
\mbox{div}\left[\big(uv\nabla w_{1}-uv\nabla w_{2}+v w_{1}\nabla u-u w_{2}\nabla v\big)\frac{u}{v}\right]\mathrm{d}x\\
=&\lim _{\varepsilon\to0}\int_{\partial\Omega_\varepsilon}
\left(a_{11}u^2\frac{\partial u}{\partial\nu}-a_{22}u^2\frac{\partial v}{\partial\nu}+uw_{1}\frac{\partial u}{\partial\nu}
-\left(d_2\frac{u^2}{v}+a_{22}u^2\right)\frac{\partial v}{\partial\nu}\right)\mathrm{d}S.
\end{align*}
Let
\begin{align*}
&I_{1}(\varepsilon)=\int_{\partial\Omega_\varepsilon}
\left(a_{11}u^2\frac{\partial u}{\partial\nu}-a_{22}u^2\frac{\partial v}{\partial\nu}+uw_{1}\frac{\partial u}{\partial\nu}
-a_{22}u^2\frac{\partial v}{\partial\nu}\right)\mathrm{d}S,\\
&I_{2}(\varepsilon)=\int_{\partial\Omega_\varepsilon}
d_2\frac{u^2}{v}\frac{\partial v}{\partial\nu}\mathrm{d}S.
\end{align*}
Obviously, $I_{1}(\varepsilon)$ approaches zero as $\varepsilon\rightarrow 0$ in terms of Dirichlet boundary conditions.
In order to deal with the term $I_{2}(\varepsilon)$, we write
$$V=\left\{\varphi(x)\in C^{1}(\overline{\Omega})\big|\varphi(x)>0,x\in\Omega;~\varphi|_{\partial\Omega}=0;
~\frac{\partial \varphi}{\partial\nu}\Big|_{\partial\Omega}<0\right\}.$$
By Hopf's Lemma, we have $\frac{\partial u(x_{0})}{\partial \nu}<0$ and $\frac{\partial v(x_{0})}{\partial \nu}<0$
for any $x_{0}\in\partial\Omega$, and thus $u\in V$ and $v\in V$.
We now define
\begin{align*}
g(x):=\left\{
\begin{aligned}
&\frac{u(x)}{v(x)},~~~~~~~~~~~~~~~&~~x\in \Omega, \\
&\frac{\partial u(x)}{\partial\nu}\big/\frac{\partial v(x)}{\partial\nu},~~~~&~~ x\in \partial\Omega.\\
\end{aligned}
\right.
\end{align*}
Then Lemma 2.4 in \cite{yz2013} shows that $g(x)\in C\big(\overline{\Omega},(0,+\infty)\big)$.
Therefore, we conclude that $I_{2}(\varepsilon)$ also approaches zero as $\varepsilon\rightarrow 0$.
Consequently,
\begin{align}\label{3.8}
\int_\Omega\mbox{div}
\left[\big(uv\nabla w_{1}-uv\nabla w_{2}+v w_{1}\nabla u-u w_{2}\nabla v\big)\frac{u}{v}\right]\mathrm{d}x=0
\end{align}
thanks to Lebesgue dominated convergence theorem and Dirichlet boundary conditions,
a contradiction.

(ii) On one hand, it is obvious to see from Theorem \ref{thm3.1}-(ii) that
\begin{align}\label{3.9}
\mbox{div}\Big[(uv\nabla w_{1}-uv\nabla w_{2}+vw_{1}\nabla u-uw_{2}\nabla v)\frac{v}{u}\Big]<0,
\end{align}
due to $d_1<d_2$, $d_2\geq d_1+2a_{11}$ and Lemma \ref{lem3.2}.

On the other hand, it follows from the analysis above that the following equality still holds
\begin{align}\label{3.10}
\int_\Omega\mbox{div}
\left[\big(uv\nabla w_{1}-uv\nabla w_{2}+v w_{1}\nabla u-u w_{2}\nabla v\big)\frac{v}{u}\right]\mathrm{d}x=0.
\end{align}
Therefore, a contradiction can also be obtained, and the conclusion is valid.
\end{proof}

\subsection{Robin boundary condition}

In this subsection, we consider the following system
\begin{align}\label{rob}
\left\{
\begin{aligned}
&\Delta[(d_1+a_{11}u)u]+u(1-u-a_{1}v)=0,&~~&x\in\Omega,\\
&\Delta[(d_2+a_{22}v)v]+v(1-a_{2}u-v)=0,&~~&x\in\Omega,\\
&\alpha_1u+\beta_1\frac{\partial u}{\partial \nu}=\alpha_2v+\beta_2\frac{\partial v}{\partial \nu}=0,&~~&x\in\partial\Omega,
\end{aligned}
\right.
\end{align}
where $\alpha_i>0$, $\beta_i>0$, $i=1,2$.

\begin{lemma}
Suppose that $(u,v)$ is a nonnegative classical solution of \eqref{rob}.
If $u\not\equiv0$, then $u\leq1$ in $\overline{\Omega}$.
Similar, if $v\not\equiv0$, then $v\leq1$ in $\overline{\Omega}$.
\end{lemma}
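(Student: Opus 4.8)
The plan is to prove the bound $u\le1$ by a maximum–principle contradiction argument that splits into an interior case and a boundary case; since the equation for $v$ has the identical structure, I would only write out the argument for $u$ and remark that $v$ is symmetric. First I would rewrite the first equation of \eqref{rob} in the nondivergence form $(d_1+2a_{11}u)\Delta u+2a_{11}|\nabla u|^2+u(1-u-a_1v)=0$, set $M=\max_{\overline\Omega}u$, and argue by contradiction assuming $M>1$; throughout I use only the nonnegativity $v\ge0$ of the solution and the positivity of the diffusion factor $d_1+2a_{11}u\ge d_1>0$.

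If the maximum is attained at an interior point $x_0\in\Omega$, then $\nabla u(x_0)=0$ and $\Delta u(x_0)\le0$. Substituting into the rewritten equation, the diffusion term is nonpositive, which forces $u(x_0)(1-u(x_0)-a_1v(x_0))\ge0$; since $u(x_0)=M>0$ and $v\ge0$, this yields $M\le 1-a_1v(x_0)\le1$, contradicting $M>1$. This is exactly the interior computation already used in Lemma \ref{lem3.2}.

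The genuinely new case, and the step I expect to demand the most care, is when the maximum is attained only on $\partial\Omega$, i.e. $u(x)<M$ for all $x\in\Omega$ while $u(x_0)=M$ at some $x_0\in\partial\Omega$; here I would invoke Hopf's boundary lemma. The key point is to recast the quasilinear equation as a linear elliptic inequality with no zeroth-order term so that Hopf applies. On the open set $\{u>1\}$ the reaction term satisfies $u(u-1+a_1v)\ge0$, so dividing the rewritten equation by $d_1+2a_{11}u>0$ gives $\Delta u+b\cdot\nabla u\ge0$ there, where $b=\frac{2a_{11}}{d_1+2a_{11}u}\nabla u$ is bounded because $u\in C^1(\overline\Omega)$, and crucially $c\equiv0$. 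Since $u$ is continuous with $u(x_0)=M>1$, the inequality $u>1$ persists in a neighborhood of $x_0$; using the interior ball condition at $x_0$ I would select an interior ball $B\subset\Omega$ tangent to $\partial\Omega$ at $x_0$ and small enough that $u>1$ on $B$. On $B$ the operator $\mathcal L=\Delta+b\cdot\nabla$ is uniformly elliptic with bounded coefficients and zero zeroth-order term, $\mathcal Lu\ge0$, and $u<M=u(x_0)$, so Hopf's boundary lemma gives $\frac{\partial u}{\partial\nu}(x_0)>0$. But the Robin condition forces $\frac{\partial u}{\partial\nu}(x_0)=-\frac{\alpha_1}{\beta_1}u(x_0)=-\frac{\alpha_1}{\beta_1}M<0$ because $\alpha_1,\beta_1>0$ and $M>0$, a contradiction. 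Hence $M\le1$, that is $u\le1$ in $\overline\Omega$, and running the same argument on the second equation, with the boundary relation $\frac{\partial v}{\partial\nu}=-\frac{\alpha_2}{\beta_2}v$, yields $v\le1$. The principal obstacle is precisely the reduction of the equation—with its nonlinear gradient term $2a_{11}|\nabla u|^2$ and the sign of the reaction term—into a form $\mathcal Lu\ge0$ with $c\equiv0$ valid on the subregion $\{u>1\}$, which is what makes Hopf's lemma directly usable against the Robin boundary relation.
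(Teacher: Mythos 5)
Your proof is correct, and its overall skeleton matches the paper's: assume $M=\max_{\overline\Omega}u>1$ for contradiction, split into an interior maximum and a boundary-only maximum, and dispose of the interior case exactly as in Lemma \ref{lem3.2}. The difference is in the boundary case, which you rightly identify as the crux. You reduce the equation on the set $\{u>1\}$ to a linear inequality $\Delta u+b\cdot\nabla u\ge0$ with zero zeroth-order term, choose a small interior ball inside $\{u>1\}$ tangent to $\partial\Omega$ at $x_0$, and invoke Hopf's boundary lemma to obtain the strict sign $\frac{\partial u}{\partial\nu}(x_0)>0$, contradicting the Robin relation $\frac{\partial u}{\partial\nu}(x_0)=-\frac{\alpha_1}{\beta_1}M<0$. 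This is valid (your $c\equiv0$ reduction and the shrinking of the tangent ball into $\{u>1\}$ are exactly the right care points), but the paper reaches the same contradiction with far less machinery: at a boundary point where $u$ attains its maximum over $\overline\Omega$, the one-sided difference quotient along the inward normal already gives $\frac{\partial u}{\partial\nu}(x_0)\ge0$ — no PDE input and no Hopf lemma needed — and then, since $u(x_0)=M>1>0$ and $\alpha_1,\beta_1>0$, the Robin expression satisfies $\alpha_1u(x_0)+\beta_1\frac{\partial u}{\partial\nu}(x_0)\ge\alpha_1M>0$, so it cannot vanish. In short, your route buys the strict inequality $\frac{\partial u}{\partial\nu}(x_0)>0$, but that strictness is superfluous here because the strict positivity of $u(x_0)$ alone breaks the Robin condition; the linearization and tangent-ball construction are therefore dispensable overhead. (Incidentally, the paper's own appeal to Remark \ref{rem2.1} for $u>0$ on $\overline\Omega$ is similarly more than is needed, since $u(x_0)>1$ is already supplied by the contradiction hypothesis.)
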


\begin{proof}
We only prove the previous statement.
First, we can obtain that
\begin{align}\label{}
\left\{
\begin{aligned}
&(d_1+2a_{11}u)\Delta u+2a_{11}|\nabla u|^2+u(1-u-a_{1}v)=0,&~~&x\in\Omega,\\
&\alpha_1u+\beta_1\frac{\partial u}{\partial \nu}=0,&~~&x\in\partial\Omega.
\end{aligned}
\right.
\end{align}
Suppose that there exists a point $x_0\in\overline{\Omega}$,
such that $u(x_0)=\max_{x\in\overline{\Omega}}u(x)>1$.

(i) If $x_0\in \Omega$, we have $\Delta u(x_{0})\leq0$, $\nabla v(x_0)=0$ and $u(x_{0})(1-u(x_{0})-a_1v(x_{0}))\geq0$.
Since $u>0$ in $\Omega$, we obtain that $1-u(x_{0})-a_1v(x_{0})\geq0$,
that is, $1\geq u(x_{0})+a_1v(x_{0})$, a contradiction.

(ii) If $x_0\in \partial\Omega$, then $u(x_0)>u(x)$ for all $x\in\Omega$.
Thus we can see that $\frac{\partial u}{\partial \nu}(x_0)\geq0$.
Since $u>0$ in $\overline{\Omega}$ by Remark \ref{rem2.1}, we have 
$$\alpha_1u(x_0)+\beta_1\frac{\partial u}{\partial \nu}(x_0)>0,$$
a contradiction.
\end{proof}

\begin{thm}\label{thm3.3}
Suppose that $(u,v)$ is a nonnegative classical solution of \eqref{rob}. If
$$\mbox{(i)}~a_1>1>a_2, d_1>d_2, d_1\geq d_2+2a_{22}, \alpha_1\geq\beta_1 \mbox{~and~} \alpha_2\leq\beta_2$$
or
$$\mbox{(ii)}~a_1<1<a_2, d_1<d_2, d_2\geq d_1+2a_{11} , \alpha_1\leq\beta_1 \mbox{~and~} \alpha_2\geq\beta_2,$$
then system \eqref{rob} has no coexisting solutions.
\end{thm}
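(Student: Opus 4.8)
\section*{Proof proposal}

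The strategy is to recycle the interior differential inequality established in the proof of Theorem~\ref{thm3.1} and to exploit the Robin conditions to pin down the sign of the resulting boundary integral. The crucial structural advantage over the Dirichlet case is that, by Remark~\ref{rem2.1}, both $u>0$ and $v>0$ on all of $\overline{\Omega}$; hence $u/v$ and $v/u$ are well defined and continuous up to $\partial\Omega$, and no singular cut-off of the type used in Theorem~\ref{thm3.2} is needed. Arguing by contradiction, I would assume $u\not\equiv0$ and $v\not\equiv0$, so that $u,v>0$ in $\overline{\Omega}$, and set $w_1=d_1+a_{11}u$, $w_2=d_2+a_{22}v$ as before.

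For case (i), the computation of Theorem~\ref{thm3.1}-(i), which uses only $a_1>1>a_2$, $d_1>d_2$, $d_1\ge d_2+2a_{22}$ and the bound $v\le1$ from the preceding lemma, gives
\begin{align*}
\mbox{div}\Big[(uv\nabla w_{1}-uv\nabla w_{2}+vw_{1}\nabla u-uw_{2}\nabla v)\tfrac{u}{v}\Big]>0 \quad\text{in }\Omega,
\end{align*}
so the integral over $\Omega$ is strictly positive. To convert this into a boundary integral I would apply the divergence theorem on $\Omega_\varepsilon$ and pass to the limit $\varepsilon\to0$, which is immediate here since the flux integrand is continuous on $\overline{\Omega}$. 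Using $\frac{\partial w_1}{\partial\nu}=a_{11}\frac{\partial u}{\partial\nu}$ and $\frac{\partial w_2}{\partial\nu}=a_{22}\frac{\partial v}{\partial\nu}$, and then substituting the Robin relations $\frac{\partial u}{\partial\nu}=-\frac{\alpha_1}{\beta_1}u$ and $\frac{\partial v}{\partial\nu}=-\frac{\alpha_2}{\beta_2}v$ on $\partial\Omega$, the flux should collapse to
\begin{align*}
\Big(a_{11}uv\tfrac{\partial u}{\partial\nu}-a_{22}uv\tfrac{\partial v}{\partial\nu}+vw_1\tfrac{\partial u}{\partial\nu}-uw_2\tfrac{\partial v}{\partial\nu}\Big)\tfrac{u}{v}
=u^{2}\Big(\tfrac{\alpha_2}{\beta_2}(d_2+2a_{22}v)-\tfrac{\alpha_1}{\beta_1}(d_1+2a_{11}u)\Big).
\end{align*}
The sign of this quantity is exactly what the new hypotheses $\alpha_1\ge\beta_1$ and $\alpha_2\le\beta_2$ are designed to control: since $\frac{\alpha_2}{\beta_2}\le1\le\frac{\alpha_1}{\beta_1}$ and $d_2+2a_{22}v\le d_2+2a_{22}\le d_1\le d_1+2a_{11}u$ (using $v\le1$ and $d_1\ge d_2+2a_{22}$), the bracket is $\le0$, so the boundary flux is $\le0$. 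This contradicts the strict positivity of the volume integral, forcing $u\equiv0$ or $v\equiv0$.

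Case (ii) is the mirror image: I would start from the inequality of Theorem~\ref{thm3.1}-(ii) for the field weighted by $v/u$, whose divergence is strictly negative, and the same substitution yields the boundary flux $v^{2}\big(\frac{\alpha_2}{\beta_2}(d_2+2a_{22}v)-\frac{\alpha_1}{\beta_1}(d_1+2a_{11}u)\big)$; now $\alpha_1\le\beta_1$, $\alpha_2\ge\beta_2$ together with $d_2\ge d_1+2a_{11}$ and $u\le1$ force the bracket to be $\ge0$, contradicting the negativity of the volume integral. The only genuinely delicate point is the boundary bookkeeping: one must verify that after the Robin substitution the cross terms recombine into the clean single-sign expression above, and that the parameter inequalities on $\alpha_i,\beta_i$ interlock with the diffusion conditions $d_1\ge d_2+2a_{22}$ (resp.\ $d_2\ge d_1+2a_{11}$) and the $L^\infty$ bounds $u,v\le1$ to produce a definite sign. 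Everything else is a direct transcription of the Neumann argument.
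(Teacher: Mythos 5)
Your proposal is correct and follows essentially the same route as the paper: recycle the interior divergence inequality from Theorem~\ref{thm3.1}, apply the divergence theorem (legitimate here since $u,v>0$ on $\overline{\Omega}$ by Remark~\ref{rem2.1}), and substitute the Robin relations to get the boundary flux $u^{2}\bigl(\frac{\alpha_2}{\beta_2}(d_2+2a_{22}v)-\frac{\alpha_1}{\beta_1}(d_1+2a_{11}u)\bigr)$, whose sign is pinned down exactly as in the paper's chain of estimates. Your explicit treatment of case (ii), which the paper omits by symmetry, is also correct.
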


\begin{proof}
We only prove the first case. Now, we can still obtain
$$\mbox{div}\Big[(uv\nabla w_{1}-uv\nabla w_{2}+vw_{1}\nabla u-uw_{2}\nabla v)\frac{u}{v}\Big]>0,$$
where $w_1=d_1+a_{11}u$ and $w_2=d_2+a_{22}v$.

Notice that $\frac{\partial u}{\partial \nu}=-\frac{\alpha_1}{\beta_1}u$,
$\frac{\partial v}{\partial \nu}=-\frac{\alpha_2}{\beta_2}v$ in $\partial\Omega$.
We consider the following integral
\begin{align*}
&\int_\Omega\mbox{div}\left[\big(uv\nabla w_{1}-uv\nabla w_{2}+v w_{1}\nabla u-u w_{2}\nabla v\big)
\frac{u}{v}\right]\mathrm{d}x\\
=&\int_{\partial\Omega}\left(a_{11}uv\frac{\partial u}{\partial\nu}-a_{22}uv\frac{\partial v}{\partial\nu}
+v w_{1}\frac{\partial u}{\partial\nu}-u w_{2}\frac{\partial v}{\partial\nu}\right)\frac{u}{v}\mathrm{d}S\\
=&\int_{\partial\Omega}\left(-a_{11}\frac{\alpha_1}{\beta_1}u^2v+a_{22}\frac{\alpha_2}{\beta_2}uv^2
-\frac{\alpha_1}{\beta_1}uv w_{1}+\frac{\alpha_2}{\beta_2}uv w_{2}\right)\frac{u}{v}\mathrm{d}S\\
=&\int_{\partial\Omega}\left(-a_{11}\frac{\alpha_1}{\beta_1}u^3+a_{22}\frac{\alpha_2}{\beta_2}u^2 v
-\frac{\alpha_1}{\beta_1}u^2 w_{1}+\frac{\alpha_2}{\beta_2}u^2 w_{2}\right)\mathrm{d}S\\
=&\int_{\partial\Omega}u^2\left(-a_{11}\frac{\alpha_1}{\beta_1}u+a_{22}\frac{\alpha_2}{\beta_2} v
-\frac{\alpha_1}{\beta_1} (d_1+a_{11}u)+\frac{\alpha_2}{\beta_2} (d_2+a_{22}v)\right)\mathrm{d}S\\
=&\int_{\partial\Omega}u^2\left(-2a_{11}\frac{\alpha_1}{\beta_1}u+2a_{22}\frac{\alpha_2}{\beta_2} v
-d_1\frac{\alpha_1}{\beta_1}+d_2\frac{\alpha_2}{\beta_2} \right)\mathrm{d}S\\
\leq&\int_{\partial\Omega}u^2\left(-2a_{11}\frac{\alpha_1}{\beta_1}u+2a_{22}\frac{\alpha_2}{\beta_2}
-d_1\frac{\alpha_1}{\beta_1}+d_2\frac{\alpha_2}{\beta_2} \right)\mathrm{d}S\\
\leq&\int_{\partial\Omega}u^2\left(-2a_{11}\frac{\alpha_1}{\beta_1}u+2a_{22}-d_1+d_2\right)\mathrm{d}S\\
\leq&0,
\end{align*}
due to $\alpha_1\geq\beta_1$, $\alpha_2\leq\beta_2$ and $d_1\geq d_2+2a_{22}$, a contradiction.

%
\end{proof}

\section{Existence of coexisting solutions}

In this section, we will investigate the existence of coexisting solutions for system \eqref{neu}.

\begin{thm}\label{thm4.1}
Let $a_1<1$ and $a_2<1$. Suppose that $(u,v)$ is a nonnegative classical solution of \eqref{neu}.
If $u\not\equiv0$ and $v\not\equiv0$, then
\begin{align}\label{4.1}
u\equiv\frac{1-a_1}{1-a_1a_2}~~~\mbox{and}~~~v\equiv\frac{1-a_2}{1-a_1a_2}.
\end{align}
\end{thm}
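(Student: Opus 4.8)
The plan is to run the Lou--Ni maximum principle argument that was already set up in the proof of Lemma \ref{lem3.1}, and then chain the resulting extremal inequalities to pin $u$ and $v$ down as the unique positive constant coexistence state. First I would record strict positivity: by Proposition \ref{prop2.1} together with Remark \ref{rem2.1}, under Neumann boundary conditions $u,v>0$ on all of $\overline{\Omega}$. Writing
$$M_u=\max_{\overline\Omega}u,\quad m_u=\min_{\overline\Omega}u,\quad M_v=\max_{\overline\Omega}v,\quad m_v=\min_{\overline\Omega}v,$$
I note that $m_u>0$ and $m_v>0$; this positivity is exactly what will let me divide by the values at the minima below.

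Next, applying Proposition 2.2 of \cite{ln1996} to each equation precisely as in Lemma \ref{lem3.1}, there are extremal points $x_1,x_2$ for $u$ and $y_1,y_2$ for $v$ at which the reaction term is nonnegative at the maximum of each unknown and nonpositive at the minimum. Dividing out the (positive) extremal values and then using $v(x_1)\ge m_v$, $v(x_2)\le M_v$, $u(y_1)\ge m_u$, $u(y_2)\le M_u$ together with $a_1,a_2>0$ yields
\begin{align*}
&M_u\le 1-a_1 v(x_1)\le 1-a_1 m_v, \qquad m_u\ge 1-a_1 v(x_2)\ge 1-a_1 M_v,\\
&M_v\le 1-a_2 u(y_1)\le 1-a_2 m_u, \qquad m_v\ge 1-a_2 u(y_2)\ge 1-a_2 M_u.
\end{align*}

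The decisive step is to combine these four bounds. Since $a_1<1$ and $a_2<1$ force $a_1 a_2<1$, substituting the lower bound for $m_v$ into the upper bound for $M_u$ gives $M_u\le 1-a_1+a_1a_2 M_u$, hence $M_u\le \frac{1-a_1}{1-a_1a_2}$; substituting the upper bound for $M_v$ into the lower bound for $m_u$ gives $m_u\ge 1-a_1+a_1a_2 m_u$, hence $m_u\ge \frac{1-a_1}{1-a_1a_2}$. As $m_u\le M_u$ always holds, both must equal $\frac{1-a_1}{1-a_1a_2}$, so $u$ is constant. The identical manipulation applied to the $v$-inequalities gives $m_v=M_v=\frac{1-a_2}{1-a_1a_2}$, which establishes \eqref{4.1}.

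I expect the only genuinely delicate point to be the justification of the extremal sign conditions on the closed domain $\overline\Omega$, namely that the reaction term is $\ge 0$ at a maximum and $\le 0$ at a minimum even when an extremum is attained on $\partial\Omega$; but this is precisely the content of Proposition 2.2 of \cite{ln1996} on which Lemma \ref{lem3.1} already relies, so I may invoke it directly. Everything after that is the linear algebra of the four inequalities, where the weak-competition hypothesis $a_1a_2<1$ guarantees $1-a_1a_2>0$ and hence validity of the divisions.
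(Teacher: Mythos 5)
Your proposal is correct and follows essentially the same route as the paper: apply Proposition 2.2 of \cite{ln1996} (as in Lemma \ref{lem3.1}) to get the extremal inequalities for $u$ and $v$, chain them using $a_1a_2<1$, and squeeze $m_u=M_u$ and $m_v=M_v$ to the constant values in \eqref{4.1}. If anything, your explicit appeal to Remark \ref{rem2.1} for strict positivity of the minima (needed to divide out $u(x_2)$ and $v(x_4)$ in the sign conditions at the minimum points) makes a step more careful than the paper, which leaves it implicit.
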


\begin{proof}
By Lemma \ref{lem3.1}, there exist $x_1,x_2\in\overline{\Omega}$ such that
$$u(x_1)=\max_{x\in\overline{\Omega}}u(x)\triangleq M,~~u(x_2)=\min_{x\in\overline{\Omega}}u(x)\triangleq m,$$
and
\begin{align}\label{4.2}
1-a_{1}v(x_2)\leq m\leq u(x)\leq M\leq1-a_{1}v(x_1) \mbox{~~for~all~}x\in\overline{\Omega}.
\end{align}
Similarly, there exist $x_3,x_4\in\overline{\Omega}$ such that
$$v(x_3)=\max_{x\in\overline{\Omega}}v(x)\triangleq M',~~v(x_4)=\min_{x\in\overline{\Omega}}v(x)\triangleq m',$$
and
\begin{align}\label{4.3}
1-a_{2}u(x_4)\leq m'\leq v(x)\leq M'\leq1-a_{2}u(x_3) \mbox{~~for~all~}x\in\overline{\Omega}.
\end{align}
We first prove $u\equiv\frac{1-a_1}{1-a_1a_2}$.
Combining \eqref{4.2} with \eqref{4.3}, we obtain
\begin{align}
m\geq1-a_{1}v(x_2)\geq1-a_{1}(1-a_{2}u(x_3))\geq1-a_1+a_1a_2m,
\end{align}
and
\begin{align}
M\leq1-a_{1}v(x_1)\leq1-a_{1}(1-a_{2}u(x_4))\leq1-a_1+a_1a_2M.
\end{align}
Notice that since $a_1<1$ and $a_2<1$, we have
\begin{align}\label{4.4}
\frac{1-a_1}{1-a_1a_2}\leq m\leq u(x)\leq M\leq\frac{1-a_1}{1-a_1a_2}.
\end{align}
Hence
$$u\equiv\frac{1-a_1}{1-a_1a_2}.$$
Similarly, it can be proved that
\begin{align}\label{4.5}
\frac{1-a_2}{1-a_1a_2}\leq m'\leq v(x)\leq M'\leq\frac{1-a_2}{1-a_1a_2}.
\end{align}
Consequently,
$$v\equiv\frac{1-a_2}{1-a_1a_2}.$$
This completes the proof.
\end{proof}

\section*{Acknowledgments}
This work was funded by the Natural Science Foundation of Shandong Province (ZR2021QA103, ZR2021MA016).



\begin{thebibliography}{20}
\footnotesize{
%
%
%
%
%


\bibitem{cc2003}R. S. Cantrell, C. Cosner. Spatial Ecology via Reaction-diffusion Equations[M].
Wiley Series in Mathematical and Computational Biology. John Wiley \& Sons, Ltd., Chichester, 2003.

\bibitem{d1991}E. Dancer. On the Existence and Uniqueness of Positive Solutions for Competing
Species Models with Diffusion[J]. Trans. Amer. Math. Soc., 1991, 326(2):829-859.

\bibitem{d1987}P. Deuring. An Initial-boundary Value Problem for Certain Density-dependent Diffusion System[J]. Math. Z., 1987, 194(3):375-396.
    
\bibitem{hnp2015}L. T. Hoang, T. V. Nguyen, T. V. Phan. Gradient Estimates and Global Existence
of Smooth Solutions to a Cross-diffusion System[J]. SIAM J. Math. Anal., 2015, 47(3):2122-2177. 

\bibitem{k1993}Y. Kan-On. Stability of Singularly Perturbed Solutions to Nonlinear Diffusion
Systems Arising in Population Dynamics[J]. Hiroshima Math. J., 1993, 23(3):509-536.

\bibitem{kim1984}J. U. Kim. Smooth Solutions to a Quasilinear System of Diffusion Equations for a
     Certain Population Model[J]. Nonlinear Anal., 1984, 8(10):1121-1144.
     
\bibitem{lz2005}Y. Li, C. Zhao. Global Existence of Solutions to a Cross-diffusion System in Higher
Dimensional Domains[J]. Discrete Contin. Dyn. Syst., 2005, 12(2):185-192.

\bibitem{ln1996}Y. Lou, W.M. Ni, Diffusion, self-diffusion and cross-diffusion, J. Differential Equations
131 (1) (1996) 79-131.

\bibitem{ln1999}Y. Lou, W.M. Ni, Diffusion vs Cross-Diffusion: An Elliptic Approach, J. Differential Equations
154 (1) (1999) 157-190.

\bibitem{ltw2017}Y. Lou, Y.S. Tao, M. Winkler, Nonexistence of nonconstant steady-state solutions in a triangular cross-diffusion model, J. Differential Equations 262 (10) (2017) 5160-5178.
    
\bibitem{lw2015}Y. Lou, M. Winkler. Global Existence and Uniform Boundedness of Smooth Solutions to a Cross-diffusion System with Equal Diffusion Rates[J]. Comm. Partial Differential Equations, 2015, 40(10):1905-1941.
    
\bibitem{m1981}M. Mimura. Stationary Pattern of some Density-dependent Diffusion System with
Competitive Dynamics[J]. Hiroshima Math. J., 1981, 11(3):621-635.

\bibitem{ruan1996}W. Ruan, Positive steady-state solutions of a competing reaction-diffusion system with large cross-diffusion coefficients, J. Math. Anal. Appl. 197 (2) (1996) 558-578.

\bibitem{skt1979}N. Shigesada, K. Kawasaki, E. Teramoto, Spatial segregation of interacting species, J. Theoret. Biol.
79 (1) (1979) 83-99.

\bibitem{shim2002}S.-A. Shim. Uniform Boundedness and Convergence of Solutions to Cross-diffusion Systems[J]. J. Differential Equations, 2002, 185(1):281-305.

\bibitem{wu2002}Y.P. Wu, Existence of stationary solutions with transition layers for a class of cross-diffusion
    systems, P. Roy. Soc. Edinb. A 132 (6) (2002) 1493-1511.

\bibitem{yz2013}T.S. Yi, X.F. Zou. On Dirichlet problem for a class of delayed reaction-diffusion equations with spatial non-locality, J.Dyn Diff Equat. 25 (2013) 959-979.

\bibitem{zt}N.N. Zhu, D. Tang, Nonexistence of coexisting steady-state solutions of Dirichlet problem for a cross-diffusion model, Math. Method. Appl. Sci. 42 (1) (2019) 346-353.

\bibitem{z2021}N.N. Zhu, Existence of steady-state solutions for a class of competing systems with cross-diffusion and self-diffusion, Electron. J. Qual. Theory Differ. Equ. 2 (2021) 1-10.

}
\end{thebibliography}
\end{document}